\newcommand{\Z}{\mathbb{Z}}
\DeclareMathOperator{\divv}{div}
\DeclareMathOperator{\Gal}{Gal}
\newcommand{\GL}{\operatorname{GL}}
\newcommand{\SL}{\operatorname{SL}}
\numberwithin{equation}{section}
\newtheorem{lemma}{Lemma}
\newtheorem{proposition}{Proposition}
\theoremstyle{definition}
\newtheorem*{Ack}{Acknowledgements}
\theoremstyle{remark}
\newtheorem{remark}[equation]{Remark}
\theoremstyle{remark}
\newtheorem{case1}{Case}
\newtheorem{case2}{Case}
\newenvironment{psmallmatrix}
  {\left(\begin{smallmatrix}}
  {\end{smallmatrix}\right)}
\definecolor{darkgreen}{rgb}{0,0.5,0}
\DeclareRobustCommand{\SkipTocEntry}[5]{}
\begin{document}

\title{Three-torsion subgroups and wild conductors of genus 3 hyperelliptic curves }

\begin{abstract}
We give a practical method for computing the 3-torsion subgroup of the Jacobian of a genus 3 hyperelliptic curve. We define a scheme for the 3-torsion points of the Jacobian and use complex approximations, homotopy continuation and lattice reduction to find precise expression for the 3-torsion. In the latter stages of the paper we explain how the 3-torsion subgroup can be used to compute the wild part of the local exponent of the conductor at 2. 
\end{abstract}

\author{Elvira Lupoian}

\address{Mathematics Institute\\
    University of Warwick\\
    CV4 7AL \\
    United Kingdom}

\email{elupoian@gmail.com}

\date{\today}
\thanks{During the completion of this work the author was supported by the EPSRC studentship EP/V520226/1}
\keywords{Jacobians, Conductor, Hyperelliptic Curve }
\subjclass[2010]{11G30, 11G20}

\maketitle

\section{Introduction}

Let $C$ be a smooth and projective curve of genus $g$ defined over a number field $K$. The Jacobian $J$ of $C$ is a $g$-dimensional abelian variety whose points can be identified with linear equivalence classes of degree $0$ divisors on the curve. If the $K$-rational points of $C$ are know, one can easily construct $K$-rational points on the Jacobian. However, it's not clear how points on the Jacobian can be constructed without prior knowledge of points on the curve. A natural question to ask is whether one can describe and compute torsion points on the Jacobian, that is, linear equivalence classes of degree $0$ divisors $D$ such that $N \cdot D = \divv(h)$ for some function on the curve $h$ and $N \in \mathbb{N}_{>0}$. 

When $C$ is an elliptic curve, describing $N$-torsion points is straightforward due to the group law operation and the explicit formula that one can express for addition. For hyperelliptic curves, $2$-torsion points are linear combinations of Weierstrass point, see \cite{HyperCurves} or \cite{HyperCurves2}. Moreover, computing such points reduces to factorising the defining polynomial of the hyperelliptic curve. For non-hyperelliptic curves, the group of $2$-torsion points of the Jacobian is generated by differences of odd theta characteristics (see \cite[Chapter 5]{dolgachev2012classical}), which in turn correspond to multi-tangent hyperplanes to the curve. Computing the $2$-torsion subgroup in the case of genus $3$, $4$ and $5$ curves was previously considered by the author in \cite{2torsion}. Torsion points of higher order on genus $2$ are explicitly described by Flynn, Testa and Bruin \cite{bruin2014descent} and Flynn \cite{flynn2015descent}, and used for descent purposes. An adjacent problem is that of finding high order rational torsion points on Jacobians of curves, and a number of works have made significant progress towards this, see for instance \cite{bernard2009jacobians}, \cite{howe2000large}, \cite{flynn1990large}, \cite{leprevost1992torsion}, \cite{howe2015genus}. There have also been developments in computing the entire rational torsion subgroup of Jacobians, by Stoll \cite{stoll1998height} in the case of genus $2$ curves and  by Muller and Reistma \cite{j2023computing} in the case of hyperelliptic curves of genus $3$.

In this paper we consider hyperelliptic curve of genus $3$. We give a general description of $3-$torsion points on Jacobians of such curves and a method for computing them. More specifically, suppose that 
\begin{center}
$C : y^{2} = f\left( x \right)$
\end{center}
is an affine model of the curve, where $f \left( x \right) \in K[x ] $  has degree 7 or 8, and no repeated roots. In Section \ref{schm} we show that when $\deg(f) =7$, $3$-torsion points correspond to $\alpha_{1}, \ldots, \alpha_{10}$ such that 
\begin{center}
$f\left( x \right) \left( x + \alpha_{1} \right)^{2} + \alpha_{7} \left( x^{3} + \alpha_{8}x^{2} + \alpha_{9}x + \alpha_{10} \right)^{3} = \left( \alpha_{2}x^{4} + \alpha_{3}x^{3} + \alpha_{4}x^{2} + \alpha_{5}x + \alpha_{6} \right)^{2}$
\end{center}
and when $\deg(f) =8$ they correspond to solutions of 
\begin{align*}
&  \left( -x^{6} - \frac{a_{7}}{2}x^{5}  - \left( -\frac{a_{6}}{2} + \frac{a_{7}^{2}}{8} \right)x^{4} + \alpha_{1} \left( - x^{5} - \frac{a_{7}}{2}x^{4} \right) - \alpha_{2} x^{4} + \alpha_{3}x^{3} + \alpha_{4}x^{2} + \alpha_{5}x + \alpha_{6} \right)^{2}  \\ &  \ \ \ \ \ \ =  \alpha_{7} \left( x^{3} + \alpha_{8}x^{2} \alpha_{9}x + \alpha_{10} \right)^{3}  + \left( x^{2} + \alpha_{1}x + \alpha_{2}\right)^{2}  f\left( x \right)
\end{align*}
where the $a_{i}$ are coefficients of $f$.

The above define a system of $10$ equations in $10$ variables, and computing the entire $3$-torsion subgroup $J[3]$ corresponds to computing the solutions of this system. Notably, this has $728$ solutions, which are generally defined over number fields of large degree. Therefore, classical Groebner basis techniques are not effective in computing the solutions of such a system. In Section \ref{comps}  we give a method for computing the points of such a zero dimensional scheme. This is a two step method; first we approximating the points as tuples of complex numbers using homotopy continuation and the Newton-Raphson method. The approximations are then used to find algebraic expressions using lattice reduction techniques. A similar method of complex approximations and lattice reduction was used by the author in \cite{2torsion} to compute the 2-torsion subgroup of some non-hyperelliptic modular Jacobians. In Section \ref{eg1}  we compute the 3-torsion subgroup of the Jacobian of the modular curve $X_{0}\left( 40 \right)$. Similar descriptions to above can be given more generally for hyperelliptic curves of higher genus and for torsion subgroup of other orders $N$. In theory, the general method describe in Section \ref{comps} can used to compute $N$-torsion subgroups of Jacobians of other curves. However, due the large degree, $N^{2g}$, this method becomes ineffective for general curves, for almost all orders $N$ and genera $g$. 

The second half of this paper explains how the 3-torsion subgroup $J\left[ 3\right]$  can be used to determine the local conductor exponent of $C$ at 2; this is a direct generalisation of \cite{DokDor}. The conductor of a curve $C/ \mathbb{Q}$ is a representation theoretic constant, defined as a product $N =  \prod_{p} p^{n_{p}}$ over the primes $p$ where $C$ has bad reduction. When $C$ is an elliptic curve, the local exponents $n_{p}$ can be computed using Tate's algorithm (see \cite[Chapter 4]{Silverman2}). For hyperelliptic curves of arbitrary genus,  there are formulae for $n_{p}$ for all $p \ne 2$, see \cite{Conductorexp}. 
For curves of genus $2$,  Dokchitser and Doris \cite{DokDor} give an algorithm for $n_{2}$. In particular, we note that $n_{2}$ is the sum of the tame and wild parts, 
\begin{center}
$n_{2} = n_{\text{tame}} + n_{\text{wild}}$
\end{center}
where $n_{\text{tame}}$ can be deduced from a regular model of the curve and $n_{\text{wild}}$ is the Swan conductor of the 3-adic Tate module of the Jacobian of $C$, and it can be computed from the action of $\Gal \left( K \left( J[3] \right) / K \right)$ on $J[3]$.

In Section \ref{cond} we give a brief theoretic overview of how local conductor exponents are calculated using a regular model of the curve and the torsion subgroups of the Jacobian.  

\begin{Ack}
The author is extremely thankful to  Samir Siksek and Damiano Testa for many helpful conversations and invaluable suggestions throughout this project. The author also sincerely thanks Tim Dokchitser for the helpful conversation regarding the tame part of the conductor; and the anonymous referee for their comments and corrections. During the completion of this work the author was a student at the University of Warwick and was supported by the EPSRC studentship EP/V520226/1
\end{Ack}

\section{Scheme of 3-torsion points }
\label{schm}
Let $C$ be a hyperelliptic curve of genus $g \ge 2$ defined over a number field $K$. More precisely, $C$ is a subvariety of the weighted projective space $\mathbb{P}{(1,g+1,1)}$ defined by an equation of the form $y^{2} = F(x,z)$, where $F(x,z) \in K[x,z]$ is a homogeneous polynomial of degree $2g+2$, which is square-free. Throughout this paper we work with an intersection of $C$
with affine patch of the form 
\begin{center}
$ y^{2} = f\left( x \right) := F(x,1) $
\end{center}
where $f\left( x \right) \in K\left[ x \right]$ has degree $2g+1$ or $2g+2$ and has no repeated roots. Moreover, after possible passing to a quadratic extension, we may assume $f$ to be monic.  We refer to the points of $C$ not appearing on the affine model as the \textit{points at infinity}.  These correspond to $Z=0$, and we observe that there is a single such point, namely $( 1 : 0 :0 ) $ when the degree of $f$ is $2g+1$; and $2$ points:  $( 1 : 1 :0)$ and $( 1 : -1:0)$ when the degree of $2g+2$ is 8. For the rest of this paper, a hyperelliptic curve will be interpreted as the affine curve defined by $y^{2} = f(x)$ along with points at infinity. See \cite{HyperCurves} or \cite{HyperCurves2} for details on the arithmetic of hyperelliptic curves.

Let $J$ be the Jacobian variety of $C$. This is a $g$-dimensional, abelian variety over $K$, whose points can be identified  with linear equivalence classes of degree $0$ divisors on $C$. 

The 3-torsion subgroup of $J$ consists of all elements $\left[ D \right]\in J $ such that $3D = \text{div}\left( h \right)$, where $h$ is a rational function on $C$.  To parametrise all such points in the case $g=3$, we treat the two degree cases separately. We begin with the following straightforward result, which is required throughout the remainder of the section.

\begin{lemma}
Let $C$ be a hyperelliptic curve of genus $g \ge 2$, defined over a number field $K$ and with affine model $y^{2} = f(x)$. Suppose that $g(x) \in K[x]$ is such that its divisor of zeros ( when $g(x)$ is viewed as element of $K(C)$) is of the form $3D$ for some effective divisor $D$ on $C$. Then $g(x) = h(x)^{3}$ for some $h \in \Bar{K}(C)$.
\begin{proof}
We can write $g$ as 
\begin{center}
    $g\left( x \right) = \alpha \left( x - \beta_{1} \right)^{r_{1}} \ldots \left( x - \beta_{s} \right)^{r_{s}}\left( x - \gamma_{1} \right)^{t_{1}} \ldots \left( x - \gamma_{n} \right)^{t_{n}} $
\end{center}
where $f\left( \beta_{i} \right) =0$ for all $i = 1 \ldots s$, $f \left( \gamma_{j} \right) \neq 0$ for all $j = 1 \ldots n$ and $\alpha \in K^{\times}$.The divisor of zero of $g$ is 
\begin{center}
    $\displaystyle \sum_{i=1}^{s} 2r_{i} \left( \beta_{i}, 0 \right) + \displaystyle \sum_{j=1}^{n} t_{j} \left(\left( \gamma_{j}, \sqrt{f\left(\gamma_{j} \right)} \right) + \left( \gamma_{j},  - \sqrt{f\left(\gamma_{j} \right)} \right)   \right) $
\end{center}
By assumption, this must equal $3D$, and hence $3$ divides $2r_{i}$ and $t_{j}$ for all $i = 1 \ldots s$ and $j = 1 \ldots n$, and the result follows.
\end{proof}
\end{lemma}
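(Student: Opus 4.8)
The plan is to regard $g$ as a rational function on $C$ via the degree-two map $x\colon C\to\mathbb{P}^1$ and to read off its divisor of zeros directly from a factorisation over $\overline{K}$. Write
\[
g(x)=\alpha\,(x-\beta_1)^{r_1}\cdots(x-\beta_s)^{r_s}\,(x-\gamma_1)^{t_1}\cdots(x-\gamma_n)^{t_n},
\]
where $\alpha\in K^\times$, the $\beta_i$ are the roots of $g$ that also satisfy $f(\beta_i)=0$, and the $\gamma_j$ are the roots of $g$ with $f(\gamma_j)\ne 0$; all the $\beta_i,\gamma_j$ are distinct. Since $g$ is a polynomial in $x$ alone, its only poles on $C$ are at the points at infinity, so its divisor of zeros is supported on the fibres of $x$ over the $\beta_i$ and $\gamma_j$.

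The key local input is the ramification of $x$. If $f(c)=0$ then $y$ is a uniformiser at the unique point $(c,0)$ above $x=c$ and $x-c=y^2\cdot(\text{unit})$ there, so $\ord_{(c,0)}(x-c)=2$; if $f(c)\ne 0$ there are two distinct points $(c,\pm\sqrt{f(c)})$ above $x=c$, at each of which $x$ is unramified, so $x-c$ vanishes to order $1$ at each. Hence the divisor of zeros of $g$ on $C$ is
\[
\sum_{i=1}^{s}2r_i\,(\beta_i,0)\;+\;\sum_{j=1}^{n}t_j\Bigl((\gamma_j,\sqrt{f(\gamma_j)})+(\gamma_j,-\sqrt{f(\gamma_j)})\Bigr),
\]
a sum over distinct points of $C$ with the indicated multiplicities. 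Imposing the hypothesis that this equals $3D$ with $D$ effective and comparing multiplicities point by point forces $3\mid 2r_i$ for all $i$ and $3\mid t_j$ for all $j$; since $\gcd(2,3)=1$ the first condition gives $3\mid r_i$. Thus every exponent in the factorisation of $g$ over $\overline{K}$ is divisible by $3$, and as $\overline{K}$ is algebraically closed $\alpha$ is a cube in $\overline{K}$; therefore $g$ is a cube in $\overline{K}[x]$.

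I do not anticipate a serious obstacle: the argument is an elementary divisibility computation once the divisor of zeros is identified. The only points requiring care are the ramification statement for $x\colon C\to\mathbb{P}^1$ at the roots of $f$, the observation that the two points above a value with $f(c)\ne 0$ are genuinely distinct (which is exactly why it matters that each $\gamma_j$ is a root of $g$ but not of $f$), and noting that the poles of $g$ lie only at infinity and so contribute nothing to the divisor of zeros.
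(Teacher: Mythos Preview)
Your proof is correct and follows essentially the same approach as the paper: factor $g$ over $\overline{K}$, separate the roots according to whether they are Weierstrass points, compute the divisor of zeros on $C$, and read off the divisibility conditions on the exponents. You supply a bit more detail than the paper does (the ramification of $x$ at Weierstrass points and the remark that $\alpha$ itself is a cube in $\overline{K}$), but the argument is the same.
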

\begin{proposition}
Let $C$ be an odd degree hyperelliptic curve of genus $3$, over a number field $K$, with an affine model 
\begin{center}
$y^{2} = f\left( x \right) = x^{7} + a_{6}x^{6} + a_{5}x^{5} + a_{4}x^{4} + a_{3}x^{3} + a_{2}x^{2} + a_{1}x + a_{0} $
\end{center}
where $a_{i} \in K$ and $f$ has no repeated roots. Let $J$ be the Jacobian of $C$. Then any non-zero 3-torsion point of $J$ is the form $\left[ \frac{1}{3} \text{div} \left( h \right) \right]$ where  
\begin{center}
$h = y \left( x + \alpha_{1} \right) + \alpha_{2}x^{4} + \alpha_{3}x^{3} + \alpha_{4}x^{2} + \alpha_{5}x + \alpha_{6}$
\end{center}
with $ \alpha_{1}, \ldots, \alpha_{6}  \in \overline{K}$ satisfying
\begin{center}
$f\left( x \right) \left( x + \alpha_{1} \right)^{2} + \alpha_{7} \left( x^{3} + \alpha_{8}x^{2} + \alpha_{9}x + \alpha_{10} \right)^{3} = \left( \alpha_{2}x^{4} + \alpha_{3}x^{3} + \alpha_{4}x^{2} + \alpha_{5}x + \alpha_{6} \right)^{2}$
\end{center}
for some $ \alpha_{7}, \alpha_{8}, \alpha_{9}, \alpha_{10} \in \bar{K}$.
\end{proposition}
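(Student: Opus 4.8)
The plan is to pass to the reduced (Mumford) representative of the class, use Riemann--Roch at the unique point at infinity to pin down the shape of a rational function $h$ witnessing $3D\sim 0$, and then recover the displayed identity by taking the ``norm'' $h\cdot\bar h$ of that function and applying the Lemma just proved.

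First I would write a non-zero $3$-torsion class as $[D]=[E-r\infty]$, where $\infty=(0:1:0)$ is the unique point at infinity (it is $K$-rational and a Weierstrass point) and $E=P_{1}+\dots+P_{r}$ is the reduced effective divisor of degree $r\le 3$ with $\infty\notin\supp(E)$ and no conjugate pair among the $P_{i}$. Since $3D\sim 0$ there is $h\in\overline{K}(C)$ with $\divv(h)=3E-3r\infty$, and because $\infty\notin\supp(E)$ the only pole of $h$ is at $\infty$, of order exactly $3r$, so $h\in L(3r\infty)$. The Weierstrass semigroup at $\infty$ is $\langle 2,7\rangle=\{0,2,4,6,7,8,\dots\}$, with $x$, $y$ of pole orders $2$, $7$. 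For $r=1$ the order $3$ is a gap, so no such $h$ exists; for $r=2$ one has $h\in L(6\infty)=\langle 1,x,x^{2},x^{3}\rangle$, a cubic polynomial in $x$ whose divisor of zeros $3E$ is three times an effective divisor, so by the Lemma $h=c(x-\delta)^{3}$ and hence $E=\divv(x-\delta)_{0}$ is a conjugate pair (or a doubled Weierstrass point), contradicting that $E$ is reduced. Therefore $r=3$, and since $L(9\infty)=\langle 1,x,x^{2},x^{3},x^{4},y,xy\rangle$ with $xy$ the unique basis vector of pole order $9$, the coefficient of $xy$ in $h$ is non-zero; after rescaling, $h=y(x+\alpha_{1})+(\alpha_{2}x^{4}+\alpha_{3}x^{3}+\alpha_{4}x^{2}+\alpha_{5}x+\alpha_{6})$ for some $\alpha_{1},\dots,\alpha_{6}\in\overline{K}$, and then $[D]=\bigl[\tfrac{1}{3}\divv(h)\bigr]$.

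Next I would set $P(x):=h\cdot\bar h=(\alpha_{2}x^{4}+\dots+\alpha_{6})^{2}-f(x)(x+\alpha_{1})^{2}$, where $\bar h=h\circ\iota$ is the hyperelliptic conjugate; this is a polynomial in $x$ of degree exactly $9$, the top term coming from $-f(x)(x+\alpha_{1})^{2}$. Since $\divv(P(x))=\divv(h)+\divv(\bar h)=3E+3\iota(E)-18\infty$, the divisor of zeros of $P(x)$ on $C$ is $3\bigl(E+\iota(E)\bigr)$, three times an effective divisor. Applying the Lemma (over a number field containing $K$ and the $\alpha_{i}$) shows $P(x)$ is a cube in $\overline{K}[x]$, and comparing degrees gives $P(x)=\alpha_{7}(x^{3}+\alpha_{8}x^{2}+\alpha_{9}x+\alpha_{10})^{3}$ for suitable $\alpha_{7},\dots,\alpha_{10}\in\overline{K}$; rearranging is precisely the displayed relation. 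For $G_{K}$-equivariance: since $\infty$ is $K$-rational, $\sigma[D]=[\sigma E-3\infty]$ for $\sigma\in G_{K}$, and $\sigma h$ (acting on coefficients, legitimate as $C$ is defined over $K$) has $\divv(\sigma h)=3\sigma E-9\infty$ with $xy$-coefficient $1$; by the uniqueness used to normalize $h$, $\sigma h$ is the function attached to $\sigma[D]$, so its tuple is $(\sigma\alpha_{1},\dots,\sigma\alpha_{10})$.

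I expect the only real obstacle to be the bookkeeping in the first step: one must know that the reduced representative avoids $\infty$ and contains no conjugate pair, so that the pole order of $h$ is exactly $3r$ and the cases $r=1,2$ can be eliminated via the semigroup and the Lemma. Once $r=3$ and the shape of $h$ are established, the remainder is the short $h\bar h$ computation together with one more application of the Lemma.
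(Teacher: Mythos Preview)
Your proof is correct, and the decisive step is the same as the paper's: once $h$ has the shape $y(x+\alpha_1)+g(x)$, you form $h\cdot\bar h=g(x)^2-f(x)(x+\alpha_1)^2$, observe its divisor of zeros is $3(E+\iota E)$, and invoke Lemma~1 to conclude it is a cube of a cubic in $\overline K[x]$, which is exactly the displayed relation. Where you differ is in the preliminary reduction. The paper fixes a degree-$3$ effective representative $D_0$ from the start and then case-splits on the shape of $h\in L(9\infty)$: it eliminates $h\in K[x]_{\le 4}$ by a direct degree count on the zero divisor, and eliminates $h=y+g(x)$ by noting $-f+g^2$ would have to be a cube of degree $7$ or $8$. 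You instead pass to the reduced (Mumford) representative $E-r\infty$ with $r\le 3$, $\infty\notin\supp(E)$, no conjugate pair, and dispose of $r=1$ via the Weierstrass gap at $3$ and of $r=2$ via Lemma~1 (the resulting cube $(x-\delta)^3$ forces $E$ to be a conjugate pair or a doubled Weierstrass point, contradicting reducedness); for $r=3$ the requirement that $h$ have pole order \emph{exactly} $9$ forces the $xy$-coefficient to be nonzero, which collapses the paper's Cases~1 and~2 in one stroke. Your route is a little cleaner and makes the $G_K$-equivariance immediate from the uniqueness of the normalized $h$, at the price of importing the standard Mumford representation; the paper's route is more self-contained but carries three cases.
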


\begin{proof}
Let  $\infty$ be the unique point at infinity and  $\left[ D \right] \in J\left[ 3\right] \setminus 0$. By the Riemann-Roch theorem there exists a unique,  effective divisor $D_{0} = P_{1} + P_{2} + P_{3} $  such that 
\begin{center}
$ D \sim D_{0} - 3 \infty$
\end{center}
As $3D$ is principal,  $3D_{0} -9 \infty= \text{ div} \left( h \right) $ , where $h$ is a rational function on $C$. Thus $h$ is in the Riemann-Roch space $\mathcal{L} \left( 9 \infty \right)$. The Riemann-Roch theorem this space has dimension $7$. The point $\infty$ is the only pole of the functions $x$ and $y$, with $x$ having order exactly $2$ and $y$ having order exactly $7$ at the pole, hence 
\begin{center}
$ 1, x, x^{2}, x^{3}, x^{4}, xy, y$
\end{center}
is a basis for $\mathcal{L}(9 \infty)$. The functions above are linearly independent since they all have different orders of vanishing at the pole $\infty$. 

Replacing  $h$ by a scalar multiple if necessary,  $h$ is either a polynomial in $x$ of degree at most 4,  or $h = y + k\left(  x \right)$ with $k\left( x \right) \in K \left[ x \right]$, $\text{deg} \left( k \right) \leq 4$, or $h = y \left( x + \alpha_{1} \right) + k\left( x \right)$ with $\alpha_{1} \in K$, $k\left( x \right) \in K \left[ x \right]$ and $\text{deg} \left( k \right) \leq 4$.

\begin{case1}
Suppose $h \in K\left[ x \right]$ and $d = \text{deg}\left( h \right) \leq 4$. Let $\theta_{1}, \ldots , \theta_{d}$ be the roots of $h$.  The divisor of zeros of $h$ is $3D_{0} = 3P_{1} + 3P_{2} + 3P_{3}$ since $ \text{ div} \left( h \right) = 3D_{0} - 9 \infty$. We can also compute the divisor of zeros directly,  and find it to be 
\begin{center}
$\displaystyle \sum_{i=1}^{d} \left( \left( \theta_{i} , \sqrt{f\left( \theta_{i} \right)} \right) +  \left( \theta_{i} ,  - \sqrt{f\left( \theta_{i} \right)} \right) \right) $
\end{center}
The above divisor has degree at most $8$,  whilst $\text{deg}\left( 3D_{0} \right) =9$, and hence they cannot be equal. Thus $h$ cannot be a polynomial in $x$ of degree at most $4$.
\end{case1}
\begin{case1}
Suppose $h = y + g\left( x \right)$ where $g \in K \left[ x \right]$ and $\text{deg} \left( g \right) \leq 4$, and let $\tilde{h} = -y + g\left( x \right)$. As before, the divisor of zeros of $h$ is $3D_{0}$, and the divisor of zeros of $\tilde{h}$ is 
\begin{center}
$3 \iota \left( D_{0} \right) = 3\iota\left( P_{1} \right) + 3 \iota \left( P_{2} \right) + 3 \iota \left( P_{3} \right)$
\end{center}
where $\iota : C \longrightarrow C$ denotes the hyperelliptic involution on $C$. The divisor of zeros of $h\tilde{h}$ is $3D_{0} + 3\iota \left( D_{0} \right)$, and hence $h \tilde{h} = - f\left( x \right) + g \left( x \right)^{2}$ is necessarily a cube as an element of $\overline{K} \left[ x \right]$ by Lemma 1. However, this is a contradiction since $ - f\left( x \right) + g \left( x \right)^{2}$ has degree 7 or 8. 
\end{case1}
\begin{case1} This is the only remaining case.
Suppose $h = y \left( x + \alpha_{1} \right) + g \left( x \right)$ where $\alpha_{1} \in K $, $g\left( x \right) \in K \left[ x \right] $ and $g$ has degree at most 4,  and let $\tilde{h} = -  y \left( x + \alpha_{1} \right) + g \left( x \right)$. Arguing as before,  the divisor of zeros of $h \tilde{h}$ is $3D_{0} + 3\iota \left( D_{0} \right) $ and hence by Lemma 1, $h \tilde{h} \in K\left[ x \right]$ is necessarily a cube.  Hence 
\begin{align*}
 h \tilde{h} &= \left( y \left( x + \alpha_{1} \right) + g \left( x \right) \right) \left( -y \left( x + \alpha_{1} \right) + g \left( x \right) \right) \\
 &= -f\left( x \right) \left( x + \alpha_{1} \right)^{2} + g\left( x \right)^{2} \\
 &= \ \alpha_{7} \left( x^{3} + \alpha_{8} x^{2} + \alpha_{9} x + \alpha_{10} \right)^{3} 
\end{align*}
for some $\alpha_{7}, \ldots , \alpha_{10} \in \overline{K}$, where $g \left( x \right) = \alpha_{2} x^{4} + \alpha_{3} x^{3} + \alpha_{4}x^{2} + \alpha_{5}x + \alpha_{6}$ for some $\alpha_{2}, \ldots, \alpha_{6} \in \overline{K}$.  
\end{case1}
\end{proof}

Equating coefficients in this expression 
\begin{center}
$f\left( x \right) \left( x + \alpha_{1} \right)^{2} + \alpha_{7} \left( x^{3} + \alpha_{8}x^{2} + \alpha_{9}x + \alpha_{10} \right)^{3} = \left( \alpha_{2}x^{4} + \alpha_{3}x^{3} + \alpha_{4}x^{2} + \alpha_{5}x + \alpha_{6} \right)^{2}$
\end{center}
gives 10 equations in $\alpha_{1}, \ldots, \alpha_{10}$, where $\left( \alpha_{1}, \ldots, \alpha_{6} \right)$ define a 3-torsion point.  We refer to the scheme defined by these 10 equations as the scheme of 3-torsion points. 

\begin{proposition}
Let $C$ be an even degree hyperelliptic curve of genus $3$, over a number field $K$, with an affine model 
\begin{center}
$y^{2} = f\left( x \right) =x^{8} +  a_{7}x^{7} + a_{6}x^{6} + a_{5}x^{5} + a_{4}x^{4} + a_{3}x^{3} + a_{2}x^{2} + a_{1}x + a_{0} $
\end{center}
where $a_{i} \in K$ and $f$ has no repeated roots. Let $J$ be the Jacobian of $C$. Then any non-zero 3-torsion point of $J$ is the form $\left[ \frac{1}{3} \text{div} \left( h \right) \right]$ where  
\begin{align*}
 h  & =  x^{2}y - x^{6} -\frac{a_{7}}{2} x^{5} + \left( -\frac{a_{6}}{2} + \frac{a_{7}^{2}}{8} \right)x^{4} + \alpha_{1} \left( xy - x^{5} - \frac{a_{7}}{2} x^{4} \right) +   \alpha_{2} \left( y - x^{4} \right)  \\ &  \ \ \ \ \ + \alpha_{3}x^{3} + \alpha_{4} x^{2} + \alpha_{5} x + \alpha_{6} 
\end{align*}
for some $ \alpha_{1}, \ldots, \alpha_{6} \in \overline{K}$ satisfying 
\begin{center}
$-f \left( x \right) l \left( x \right)^{2} + g \left( x \right) ^{2} = \alpha_{7} \left( x^{3} + \alpha_{8}x^{2} + \alpha_{9}x + \alpha_{10} \right)^{3} $
\end{center}
for some $\alpha_{7},\ldots, \alpha_{10} \in \overline{K} $ where
\begin{align*}
l\left( x \right) &= x^{2} + \alpha_{1} x + \alpha_{2} \\
g \left( x \right) &= - x^{6} + \left( -\frac{a_{7}}{2} - \alpha_{1} \right) x^{5} + \left( - \frac{a_{6}}{2} + \frac{a_{7}^{2}}{8} - \frac{\alpha_{1}a_{7}}{2} - \alpha_{2} \right) x^{4} + \alpha_{3}x^{3} + \alpha_{4}x^{2} + \alpha_{5}x + \alpha_{6}.
\end{align*}
\end{proposition}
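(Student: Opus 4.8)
The plan is to run the argument of Proposition~1 with the modifications needed for the two points at infinity $\infty_1=(1:1:0)$ and $\infty_2=(1:-1:0)$, both of which are $K$-rational since $f$ is monic. Fix $[D]\in J[3]\setminus 0$. Since $\deg(D+\infty_1+2\infty_2)=3=g$, Riemann--Roch produces an effective divisor $D_0$ of degree $3$ with $D\sim D_0-\infty_1-2\infty_2$; as $3D\sim 0$ this gives $3D_0\sim 3\infty_1+6\infty_2$, so $3D_0-3\infty_1-6\infty_2=\operatorname{div}(h)$ for some $h\in K(C)^{\times}$, and hence $h\in L(3\infty_1+6\infty_2)$ with divisor of zeros $3D_0$. (The asymmetric choice $\infty_1+2\infty_2$ is forced by the shape of $h$ in the statement, whose pole at $\infty_1$ has order $\le 3$ and whose pole at $\infty_2$ has order $\le 6$.)

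The first real task is to compute $L(3\infty_1+6\infty_2)$, of dimension $9-g+1=7$. The point --- and the source of the coefficients $a_6,a_7$ --- is that $y$, $xy$, $x^2y$ have poles of order $4,5,6$ at \emph{each} $\infty_i$, so none of them lies in $L(3\infty_1+6\infty_2)$; one must subtract the leading terms of the Puiseux expansion $y=\sqrt{f}=x^4+\tfrac{a_7}{2}x^3+\bigl(\tfrac{a_6}{2}-\tfrac{a_7^2}{8}\bigr)x^2+O(x)$ of $y$ at $\infty_1$ to cut the pole there down to order $\le 3$. This yields the basis
\begin{align*}
&1,\quad x,\quad x^2,\quad x^3,\quad y-x^4,\quad xy-x^5-\tfrac{a_7}{2}x^4,\\
&\qquad\qquad x^2y-x^6-\tfrac{a_7}{2}x^5-\bigl(\tfrac{a_6}{2}-\tfrac{a_7^2}{8}\bigr)x^4,
\end{align*}
whose seven members have pairwise distinct pole orders $0,1,2,3,4,5,6$ at $\infty_2$ and are therefore linearly independent. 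I would then scale $h$ and distinguish cases by the leading basis element that occurs. In the principal case the coefficient of $x^2y-\cdots$ is nonzero; normalising it to $1$ and collecting the polynomial terms writes $h=l(x)y+g(x)$ with $l(x)=x^2+\alpha_1x+\alpha_2$, and $g(x)$ is exactly the polynomial displayed in the statement --- its three top coefficients are forced by membership in $L(3\infty_1+6\infty_2)$, and $\alpha_3,\dots,\alpha_6$ are free.

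Now let $\iota$ be the hyperelliptic involution, which fixes $x$, negates $y$ and swaps $\infty_1\leftrightarrow\infty_2$, and set $\tilde h=h\circ\iota=-l(x)y+g(x)$. Then $\operatorname{div}(\tilde h)=\iota^{*}\operatorname{div}(h)=3\iota(D_0)-6\infty_1-3\infty_2$, so $h\tilde h$ has divisor of zeros $3(D_0+\iota D_0)$, three times an effective divisor. Lemma~1, applied to the polynomial $h\tilde h=g(x)^2-l(x)^2f(x)$, forces this to be a cube in $\overline K[x]$; and because its pole divisor on $C$ is exactly $9\infty_1+9\infty_2$, it has degree exactly $9$ as a polynomial in $x$. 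Hence $g^2-l^2f=\alpha_7(x^3+\alpha_8x^2+\alpha_9x+\alpha_{10})^3$ for suitable $\alpha_7,\dots,\alpha_{10}\in\overline K$ ($\alpha_7$ the leading coefficient, the monic cubic the normalised cube root), which rearranges to the asserted identity. The remaining cases (coefficient of $x^2y-\cdots$ vanishing) are excluded exactly as in Cases~1--2 of Proposition~1: $h$ then lies in a subspace all of whose elements have total pole degree $\le 8$ (resp. $\le 7$, $\le 6$), so $\operatorname{div}^{+}(h)$ has degree $<9$, contradicting $\operatorname{div}^{+}(h)=3D_0$. Galois-equivariance is then immediate: $\infty_1,\infty_2$, the divisor $\infty_1+2\infty_2$ and the space $L(3\infty_1+6\infty_2)$ are defined over $K$, and $D_0$ --- hence $h$ up to a $K^{\times}$-scalar --- is canonically attached to $[D]$, so $\sigma\in G_K$ carries the tuple $(\alpha_1,\dots,\alpha_{10})$ of $[D]$ to that of $\sigma[D]$.

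The main obstacle I expect is the Riemann--Roch bookkeeping for the even-degree model: correctly determining $L(3\infty_1+6\infty_2)$ and its basis (which is precisely where the peculiar shape of $h$, and the coefficients $a_6,a_7$, enter), and then verifying that $g^2-l^2f$ has degree \emph{exactly} $9$, so that it is genuinely $\alpha_7$ times the cube of a monic cubic rather than a cube of smaller degree. The case analysis and the Galois statement are then formally the same as in the odd-degree proof.
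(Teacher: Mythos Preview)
Your proposal is correct and takes essentially the same approach as the paper: Riemann--Roch places $h$ in $L(3\infty_{1}+6\infty_{2})$, you exhibit the same seven-element basis, split into cases by the leading basis element, and in the principal case multiply by $\tilde h=h\circ\iota$ and invoke Lemma~1 to force $g^{2}-l^{2}f$ to be a cube. The only minor deviation is that you dispatch the lower cases by a direct pole-degree count ($\deg\operatorname{div}^{+}(h)\le 8<9=\deg 3D_{0}$), whereas the paper, in its Cases~2 and~3, first forms $h\tilde h$ and then derives a contradiction from the degree of the resulting cube; your shortcut is cleaner, and the concern you flag about $g^{2}-l^{2}f$ having degree \emph{exactly}~$9$ is one the paper itself handles only with the phrase ``in general''.
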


\begin{proof}
Let  $\infty_{+}$ and $ \infty_{-}$ be the two points at infinity  and  $\left[ D \right] \in J\left[ 3\right] \setminus 0$. By Riemann-Roch there exists a unique,  effective divisor $D_{0} = P_{1} + P_{2} + P_{3} $  such that 
\begin{center}
$ D \sim D_{0} - \infty_{+} - 2 \infty_{-}$
\end{center}
As $3D$ is principal,  $3D_{0} -3 \infty_{+} - 6 \infty_{-} = \text{ div} \left( h \right) $ , where $h$ is a rational function on $C$. Thus $h$ is in the Riemann-Roch space $\mathcal{L}( 3 \infty_{+} +  6 \infty_{-})$. As function on $C$, $x$ has poles of order exactly $1$ at both $\infty_{+}$ and $\infty_{-}$ and thus $1, x, x^2, x^3$ are elements of $\mathcal{L}( 3 \infty_{+} + 6 \infty_{-})$. Moreover, consider the functions $g_{1} = y - x^4$, $ g_{2} = xy - x^{5} - \frac{a_{7}}{2}x^{4}$ and $g_{3} = x^{2}y - x^{6} - \frac{a_{7}}{2}x^{5} + \left( -\frac{a_{6}}{2} + \frac{a_{7}^{2}}{8} \right) x^{4}$.  From our explicit model, we note that these have a pole of order exactly $4$, $5$ and $6$ resp. at $\infty_{-}$, and they all have have a pole of order at most $3$ at $\infty_{+}$ (accounting for the fact that $\infty_{+}$ is a possible zero of the $g_{i}$, with order determined by the coefficients), hence these functions contained in $ \mathcal{L}( 3 \infty_{+} + 6 \infty_{-})$. Moreover, by the Riemann-Roch theorem, the space has dimension  $7 $ and comparing orders at $ \infty_{-}$, we note that  $1, x, x^{2}, x^{3}, g_{1}, g_{2},g_{3}$ are all linearly independent and thus necessarily form a basis of the Riemann-Roch space. 

By possibly replacing $h$ by a scalar multiple,  $h$ will necessarily fall in one of the following four cases.
\begin{case2} 
Suppose $h$ is a polynomial in $x$ of degree $d \leq 3$.  Let $\theta_{1}, \ldots, \theta_{d}$ be the roots of $h$.  The divisor of zeros of $h$ is $3D_{0}$ since  $ \text{ div} \left( h \right) = 3D_{0} - 9 \infty$. We can also compute the divisor of zeros directly,  and find it to be 
\begin{center}
$\displaystyle \sum_{i=1}^{d} \left( \left( \theta_{i} , \sqrt{f\left( \theta_{i} \right)} \right) +  \left( \theta_{i} ,  - \sqrt{f\left( \theta_{i} \right)} \right) \right) $
\end{center}
The above divisor has degree at most $6$,  whilst $\text{deg}\left( 3D_{0} \right) =9$, and hence they cannot be equal. Thus $h$ cannot be a polynomial in $x$ of degree at most $3$.
\end{case2}

\begin{case2}
Suppose $h$ is of the form 
\begin{align*}
h &  = y - x^{4} + \alpha_{1}x^{3} + \alpha_{2}x^{2} + \alpha_{3}x + \alpha_{4}  \\ & =  \ y + g\left( x \right)
\end{align*}
for some $\alpha_{1}, \ldots, \alpha_{4} \in \overline{K}$, where $g\left( x \right) = - x^{4} + \alpha_{1}x^{3} + \alpha_{2}x^{2} + \alpha_{3}x + \alpha_{4} $. Let $\tilde{h} = -y + g\left( x \right)$.  Arguing as in the proof of the previous proposition, the divisor of zeros of $h$ is $3D_{0}$; and the divisor of zeros of $\tilde{h}$ is $3 \iota \left( D_{0} \right)$. The divisor of zeros of $h\tilde{h} \in K \left[ x \right]$ is $3 D_{0} + 3 \iota \left( D_{0} \right)$, and thus by Lemma 1, $h\tilde{h} \in K \left[ x \right]$ is necessarily a cube. 
We find that 
\begin{align*}
h\tilde{h} &= \left( y + g\left( x \right) \right) \left( -y + g \left( x \right) \right) \\
               &= -f\left( x \right) + g\left( x \right)^{2}
\end{align*} 
has degree at most 7, and hence it has degree 6 or 3 if it is indeed a cube.   Suppose $h\tilde{h}$ has degree 6, so $h\tilde{h} = q^{3}$ where $q \in K\left[ x \right]$ is a quadratic polynomial.  Let $\theta_{1}, \theta_{2}$ be the roots of $q$. Then the divisor of zeros of $h\tilde{h}$ is
\begin{center}
$\displaystyle 3  \sum_{i=1}^{2} \left( \left( \theta_{i}, \sqrt{f\left( \theta_{i} \right) } \right) + \left( \theta_{i} , - \sqrt{f \left( \theta_{i} \right) }\right) \right) $
\end{center} 
and by considering the degree of this divisor,  it cannot equal $3D_{0}$.  A very similar argument shows that the $\text{deg} \left( h \tilde{h} \right) = 3$ also leads to a contradiction. Thus $h$ cannot be of the stated form. 
\end{case2}
\begin{case2}
Suppose $h$ is of the form 
\begin{align*}
h &= xy - x^{5} - \frac{a_{7}}{2}x^{4} + \alpha_{1} \left( y - x^{4} \right) + \alpha_{2}x^{3} + \alpha_{3}x^{2} + \alpha_{4}x + \alpha_{5} \\
&= l\left( x \right)y + g \left( x \right)
\end{align*}
for some $\alpha_{1}, \ldots, \alpha_{5} \in \overline{K}$, where $l\left( x \right) = x + \alpha_{1}$, $g\left( x \right) = -x^{5} -\frac{a_{7}}{2}x^{4} - \alpha_{1}x^{4} + \alpha_{2}x^{3} + \alpha_{3}x^{2} + \alpha_{4}x + \alpha_{5}$. Let $\tilde{h} = -  l\left( x \right) y + g \left( x \right)$.  Arguing as before, $h\tilde{h} \in K \left[ x \right]$ is a cube. We find that 
\begin{align*}
h\tilde{h} &= \left( l\left( x \right)y + g\left( x \right) \right) \left( -l\left( x \right)y + g \left( x \right) \right) \\
               &= -l\left( x \right)^{2} f\left( x \right) + g\left( x \right)^{2}
\end{align*} 
has degree at most $8$, and hence it must have degree $3$ or $6$ if it is a cube.  As in case 2, both possible degrees lead to a contradiction.  Hence $h$ cannot be of the stated form. 
\end{case2}
\begin{case2}
Suppose $h$ is of the form 
\begin{align*}
h &= x^{2}y - x^{6} - \frac{a_{7}}{2}x^{5} + \left( -\frac{a_{6}}{2} + \frac{a_{7}^{2}}{8} \right) x^{4} +   \alpha_{1}\left(  xy - x^{5} - \frac{a_{7}}{2}x^{4} \right)  + \alpha_{2} \left( y - x^{4} \right)  \\ & \ \ \ \ + \alpha_{3}x^{3} + \alpha_{4}x^{2} + \alpha_{5}x + \alpha_{6} \\
&= l\left( x \right)y + g \left( x \right)
\end{align*}
for some $\alpha_{1}, \ldots, \alpha_{6} \in \overline{K}$, where $l\left( x \right) = x^{2} + \alpha_{1}x + \alpha_{2}$, $g\left( x \right) = -x^{6} - \frac{a_{7}}{2}x^{5}  - \left( -\frac{a_{6}}{2} + \frac{a_{7}^{2}}{8} \right)x^{4} + \alpha_{1} \left( - x^{5} - \frac{a_{7}}{2}x^{4} \right) - \alpha_{2} x^{4} + \alpha_{3}x^{3} + \alpha_{4}x^{2} + \alpha_{5}x + \alpha_{6}$.
Following previous arguments, set $\tilde{h} = -l\left( x \right)y + g\left( x \right)$, then by considering the divisor of zeros of $h\tilde{h}$ we find that $h\tilde{h} \in K \left[ x \right]$ must be a cube.  In general, 
\begin{align*}
h\tilde{h} &= \left( l\left( x \right)y + g\left( x \right) \right) \left( -l\left( x \right)y + g \left( x \right) \right) \\
               &= -l\left( x \right)^{2} f\left( x \right) + g\left( x \right)^{2}
\end{align*} 
has degree 9, and so it must be the cube of a degree 3 polynomial; and so there exist $\alpha_{7}, \ldots, \alpha_{10} \in \overline{K}$ such that 
\begin{center}
$ -l\left( x \right)^{2} f\left( x \right) + g\left( x \right)^{2} = \alpha_{7} \left( x^{3} + \alpha_{8}x^{2} \alpha_{9}x + \alpha_{10} \right)^{3}$
\end{center}
Thus such $h$ define 3-torsion points on $J$. 
\end{case2}
\end{proof}

As in the previous case, we equate coefficients in the expression 
\begin{align*}
&  \left( -x^{6} - \frac{a_{7}}{2}x^{5}  - \left( -\frac{a_{6}}{2} + \frac{a_{7}^{2}}{8} \right)x^{4} + \alpha_{1} \left( - x^{5} - \frac{a_{7}}{2}x^{4} \right) - \alpha_{2} x^{4} + \alpha_{3}x^{3} + \alpha_{4}x^{2} + \alpha_{5}x + \alpha_{6} \right)^{2}  \\ &  \ \ \ \ \ \ =  \alpha_{7} \left( x^{3} + \alpha_{8}x^{2} + \alpha_{9}x + \alpha_{10} \right)^{3}  + \left( x^{2} + \alpha_{1}x + \alpha_{2}\right)^{2}  f\left( x \right)
\end{align*}
to obtain 10 equations in $\alpha_{1}, \ldots, \alpha_{10}$, with $\left( \alpha_{1}, \ldots, \alpha_{6} \right)$ representing a 3-torsion point. 

\section{Explicit Computations of $3$-torsion points} \label{comps}
Let $e_{1}, \ldots ,   e_{10}$ be the equations in $\alpha_{1}, \ldots, \alpha_{10}$ defining a scheme of 3-torsion points as in the previous section. Due to the intricate nature of the solutions of this system, computing them via classical Gr\"{o}bner basis techniques
is extremely inefficient. Instead, we employ a two step strategy; we first approximate the solutions as complex tuples of numbers and then use lattice reduction to find precise expressions. This method is detailed in \cite{2torsion} and we give a short overview in this section. 

\subsection{Complex Approximations} The solutions of our system can be approximated as complex numbers using the Newton-Raphson method, which we briefly recall here, see  \cite[Page 298]{NA} for a detailed explanation. 

We view  $E = \left( e_{1}, \ldots, e_{10} \right)$ as a function $\mathbb{C}^{10} \longrightarrow \mathbb{C}^{10}$ and  write $dE$ for the Jacobian matrix of $E$. Suppose $\mathbf{x}_{0}$ is an approximate solution to $E$ with $dE\left( \mathbf{x}_{0} \right)$ invertible.  For $k \geq 1$, define 
\begin{center}
$\mathbf{x}_{k} = \mathbf{x}_{k-1} - dE\left( \mathbf{x}_{k-1} \right)^{-1}E\left( \mathbf{x}_{k-1} \right)$
\end{center}
Provided the initial approximation $\mathbf{x}_{0}$ is a good enough approximation,  the resulting sequence $\lbrace \mathbf{x}_{k} \rbrace_{k \geq 0 }$ converges to a root of $E$, with each iterate having increased precision. The initial complex approximations required by this method can be obtained using homotopy continuation and its implementation in \texttt{Julia} (see \cite{julia}). Concisely, homotopy continuation is method for approximating the solutions of a system of polynomial equations by deforming the solutions of a similar system.  More precisely, let $F$ be a system of $10$ polynomials in $\alpha_{1}, \ldots, \alpha_{n}$,  which has exactly $\text{deg} \left( E \right) =  \prod_{i} \text{deg} \left( e_{i} \right)$ solutions, where $\deg(e_{i})$ is the largest degree of a monomial of $e_{i}$ and consider the function $  H :  \mathbb{C}^{10} \times \left[ 0, 1 \right] \longrightarrow \mathbb{C}^{10} $ defined  by: 
\begin{center}
    $ H \left( \mathbf{x}, t \right) \  = \  \left( 1 - t \right) F\left( \mathbf{x} \right) + t E \left( \mathbf{x} \right)$
\end{center}
For a fixed $N \in \mathbb{N}$, define $H_{s}\left( \mathbf{x} \right) = H \left( \mathbf{x}, s/N  \right)$ for any $s \in \left[ 0, N \right] \cap \mathbb{N}$. Observe that when $N$ is large enough, the solutions of $H_{s} \left( \mathbf{x} \right)$ are good approximations of the solutions of $H_{s+1}\left( \mathbf{x} \right)$, and using the Newton-Raphson method we can increase their precision.  The solutions of $H_{0} \left( \mathbf{x} \right) = F\left( \mathbf{x} \right)$ are known, and they can be used to define solution paths to approximate solutions of $H_{N}\left( \mathbf{x} \right) = E \left( \mathbf{x} \right)$.

Homotopy Continuation  is implemented in the \texttt{Julia} package HomotopyContinuation.jl (see \cite{julia}). 

\subsection{Lattice Reduction and Algebraic Expressions}
Suppose $\left( a_{1}, \ldots,  a_{10} \right)$ is the complex approximation to a solution of 
 the system defined by $e_{1}, \ldots, e_{10}$, accurate to $k$ decimal places. In this subsection we describe how lattice reduction is used to find the minimal polynomial of the algebraic number approximated by $a_{i}$. This technique is standard, see for instance \cite[Section 2.7.2]{cohen} or \cite[Chapter 6.1]{smart}. Moreover, computing the minimal polynomial of a complex approximation is implemented in \texttt{MAGMA} by the command \texttt{MinimalPolynomial}, whose input is a complex approximation and the supposed degree of its minimal polynomial, and its output is a polynomial with integer coefficients and of the given degree, and which has a root that is well approximated by our first input. This implementation uses similar techniques to the method described in this subsection, but we found it to be slower, especially when the degree of the polynomial is large.

For a fixed i, let $\alpha = a_{i}$ and suppose that this approximates $\theta$, an algebraic number. Then, for some $d \in \mathbb{N}$ and $c_{0}, \ldots, c_{d} \in \mathbb{Z}$ 
\begin{center}
$c_{d}\alpha^{d} + \ldots + c_{1}\alpha + c_{0} = 0$.
\end{center}
Suppose $\theta \in \mathbb{R}$. Fix a constant $C \thicksim  10^{k}$ and let $\mathcal{L}_{k}$ be the lattice generated by the columns of the $\left( d+1 \right) \times \left( d + 1 \right)$ matrix 
\begin{center}
$A_{k} = \begin{pmatrix}
1 & \ldots & 0 & 0  \\
0 & \ldots & 0 & 0 \\
\vdots & \ddots & \vdots & \vdots \\
0 & \ldots & 1& 0 \\
\left[ C \theta^{d} \right]   & \ldots & \left[ C \theta \right]  & \left[ C \right]  

\end{pmatrix}
 = \left( v_{d} , \ldots, v_{1}, v_{0} \right)
$
\end{center}
where $\left[ a \right]$ denotes the integer part of $a$. As $c_{0}, \ldots, c_{d} \in \mathbb{Z}$
\begin{center}
$  \mathbf{c}_{k} = \begin{pmatrix}
c_{d} \\
\vdots \\
c_{1} \\
a
\end{pmatrix} = c_{d}v_{n} + \ldots + c_{0}v_{0}  \in \mathcal{L}_{k} 
$
\end{center}
where $a = c_{d}\left[ C \theta^{d} \right]  + \ldots + c_{1} \left[ C \theta \right]  + c_{0}  \left[ C \right]  $. We can recover $\mathbf{c}_{\infty} = \left( c_{d} , \ldots , c_{0} \right)$ from $\mathbf{c}_{k}$ by setting 
\begin{center}
$ c_{0} =  a - \left(  c_{d}\left[ C \theta^{d} \right]  + \ldots + c_{1} \left[ C \theta \right]  \right)  $
\end{center} 

Arguing as in \cite[Section 5.2]{2torsion}, for any $k \geq 1 $
\begin{center}
$
\vert \vert \mathbf{c}_{k} \vert \vert   = \sqrt{ c_{d_{\theta}}^{2} + \ldots + c_{1}^{2} + \gamma^{2} }  \le \sqrt{2} \  \vert \vert \ \mathbf{c}_{\infty} \vert \vert^{2} $
\end{center}
and hence for any precision $k$,  $\vert \vert \mathbf{c}_{k} \vert \vert$ is bounded by the fixed constant $ \sqrt{2} \vert \vert \ \mathbf{c}_{\infty} \vert \vert^{2} $. Hermite's theorem \cite[page 66]{smart} predicts the length of the shortest vector in $\mathcal{L}_{k}$ to be approximately  $d \left( \mathcal{L}_{k} \right)^{\frac{1}{d+1 }}$, where 
$d\left( \mathcal{L_{k}} \right)$ is the determinant of the lattice. Thus, heuristically, if  the shortest vector of $\mathcal{L}_{k}$ is much smaller than this bound, it is a good candidate for the vector $\mathbf{c}_{k}$.
To search for short vectors in $\mathcal{L}$ we use the \texttt{Magma} command \texttt{ShortestVectors} which given a lattice returns a sequence containing all vectors of the lattice which have the minimum non-zero norm, see \cite{shortvectors}. The \texttt{Magma} algorithm for computing short vectors first computes a reduced basis of the lattice using its efficient implementation of the LLL-algorithm, which is based on both Nguyen and Stehl\`{e}'s floating-point LLL algorithm \cite{NSLLL} and de Weger's exact integral algorithm \cite{dWLLL}; and then uses a closest vector algorithm, such as the one described by Fincke and Pohst \cite{FP}, to determine the shortest non-zero vector in $\mathcal{L}$. 

\begin{remark}
When the imaginary part of $\theta$ is not 0,  the same method can be used with $\mathcal{L}_{k}$ being generated by the columns of 

\begin{center}
$A_{k} = \begin{pmatrix}
1 & \ldots & 0 & 0   \\
0 & \ldots & 0 & 0   \\
\vdots & \ddots & \vdots & \vdots\\
0 & \ldots & 1 & 0 \\
\left[ C \text{Re}\left(  \theta^{d} \right)  \right]   & \ldots  &   \left[ C  \text{Re} \left( \theta \right)  \right] &  \left[ C \right]  \\
\left[ C \text{Im}\left(  \theta^{d} \right)  \right]   & \ldots   &   \left[ C  \text{Im} \left( \theta \right)  \right] &  0  
\end{pmatrix}
$
\end{center}

where $\text{Re} \left( \theta \right)$ and $\text{Im} \left( \theta \right)$ denote the real and imaginary parts of $\theta$.
\end{remark}

To summarise, the strategy for finding the coefficients of the minimal polynomial of $\alpha$ is as follows. 

\begin{itemize}
\item[1.]Choose $d$.
\item[2.] Define the lattice  $\mathcal{L}_{k}$.
\item[3.] In  $\mathcal{L}_{k}$ look for vectors which are shorter than, say  $1/1000 d \left(  \mathcal{L}_{k} \right)^{\frac{1}{d+1}} $.  If such a vector doesn't exists,  either increase the precision k and start again, or choose a different degree and start again. 
\item[4.] If such a vector exists, verify that $\theta$ is an approximate solution of the corresponding polynomial. If this is not the case,  choose a different degree and start again.
\end{itemize}

Once we have candidates for the minimal polynomials of all coordinates approximated by $(a_{1}, \ldots, a_{10}) $ we want to identify which roots of the minimal polynomials correspond to the point approximated. The simplest way of doing this is by factorising all polynomials and testing which combinations define solutions of the given system. However, even when the degrees of the minimal polynomials are fairly small, this can be very impractical. Instead, we use lattice reduction to search for algebraic relations between coefficients. 
For instance, if the minimal polynomial corresponding to $a_{2}$ has a root over the number field defined by the polynomial corresponding to $a_{1}$, we suspect that for some $b_{0}, \ldots, b_{d_{1}} \in \Z $
\begin{center}
    $-b_{d_{1}}x_{2} = b_{d_{1} -1}x_{1}^{d_{1} -1} + \ldots + b_{1}x_{1} + b_{0}$
\end{center}
where $x_{1}, x_{2}$ are the algebraic numbers approximated by $a_{1}, a_{2}$ and $d_{1}$ is the degree of the minimal polynomial of $f_{1}$. Similar to the method used to search for minimal polynomial, such integers can be found by searching for short vectors in the lattice defined using the approximations. 
If we suspect that no relations as above exist, we can use the same method to look for higher order relations that could help to identify the corresponding point. 

The correctness of the minimal polynomials and coefficient relations computed can be checked by verifying that the  points they define are solutions of the initial system of equations $e_{1}, \dots, e_{10}$.

\section{Example: $J_{0}(40)[3]$} \label{eg1}
\allowdisplaybreaks
Our method is especially practical when $J[3]$ is defined over a number field of small degree. TO demonstate this, we compute the 3-torsion subgroup of the modular Jacobian $J_{0} \left( 40 \right)$.The \texttt{MAGMA} code in the computations presented in this section can be found in the online repository:
\begin{center}
\href{https://github.com/ElviraLupoian/3TorsionOfGenus3HypCurves}{https://github.com/ElviraLupoian/3TorsionOfGenus3HypCurves}
\end{center}

The model of the modular curve $X_{0} \left( 40 \right)$ given by \texttt{Magma} is:
\begin{center}
$ y^{2} + ( -x^4 - 1  ) y =  2x^6 - x^4 + 2x^2$.
\end{center}
Completing the square gives a model of the form required by Section 2,
\begin{center}
$ y^{2} = x^8 + 8x^6 -2x^4 + 8x^2 + 1 $.
\end{center}
The scheme of 3-torsion points is defined by 10 equations 
\begin{align*}
& \alpha_{2}^2 - \alpha_{6}^2 - \alpha_{9}^3\alpha_{10} , \\
  &  2\alpha_{1}\alpha_{2} - 2\alpha_{5}\alpha_{6} - 3\alpha_{8}\alpha_{9}^2\alpha_{10}, \\   
  & \alpha_{1}^2 + 8\alpha_{2}^2 + 2\alpha_{2} - 2\alpha_{4}\alpha_{6} - \alpha_{5}^2 - 3\alpha_{7} \alpha_{9}^2\alpha_{10} - 
        3\alpha_{8}^2\alpha_{9}\alpha_{10} , \\    
        &  16\alpha_{1}\alpha_{2} + 2\alpha_{1} - 2\alpha_{3}\alpha_{6} - 2\alpha_{4}\alpha_{5} - 6\alpha_{7}\alpha_{8}\alpha_{9}\alpha_{10} -
       \alpha_{8}^3\alpha_{10} - 3\alpha_{9}^2\alpha_{10}, \\
 &   8\alpha_{1}^2 - 2\alpha_{2}^2 + 2\alpha_{2}\alpha_{6} + 16\alpha_{2} - 2\alpha_{3}\alpha_{5} - \alpha_{4}^2 + 8\alpha_{6} 
        - 3\alpha_{7}^2\alpha_{9}\alpha_{10} - 3\alpha_{7}\alpha_{8}^2\alpha_{10} - 6\alpha_{8}\alpha_{9}\alpha_{10} + 1 , \\
  &  -4\alpha_{1}\alpha_{2} + 2\alpha_{1}\alpha_{6} + 16\alpha_{1} + 2\alpha_{2}\alpha_{5} - 2\alpha_{3}\alpha_{4} + 8\alpha_{5} - 
        3\alpha_{7}^2\alpha_{8}\alpha_{10} - 6\alpha_{7}\alpha_{9}\alpha_{10} - 3\alpha_{8}^2\alpha_{10} , \\
   & -2\alpha_{1}^2 + 2\alpha_{1}\alpha_{5} + 8\alpha_{2}^2 + 2\alpha_{2}\alpha_{4} - 4\alpha_{2} - \alpha_{3}^2 + 8\alpha_{4} 
        + 2\alpha_{6} - \alpha_{7}^3\alpha_{10} - 6\alpha_{7}\alpha_{8}\alpha_{10} - 3\alpha_{9}\alpha_{10} + 8 , \\
 &   16\alpha_{1}\alpha_{2} + 2\alpha_{1}\alpha_{4} - 4\alpha_{1} + 2\alpha_{2}\alpha_{3} + 8\alpha_{3} + 2\alpha_{5} - 
        3\alpha_{7}^2\alpha_{10} - 3\alpha_{8}\alpha_{10} , \\
   & 8\alpha_{1}^2 + 2\alpha_{1}\alpha_{3} + 8\alpha_{2} + 2\alpha_{4} - 3\alpha_{7}\alpha_{10} - 18, \\
 &   8\alpha_{1} + 2\alpha_{3} - \alpha_{10}, \\
\end{align*}

where the 3-torsion points are classes of divisors of the form $\frac{1}{3} \text{div} \left( h \right)$, 
\begin{center}
$ h   = x^2y - x^6 -4x^4  + \alpha_{1} \left(   xy - x^5  \right) + \alpha_{2} \left( y - x^4 \right)  + \alpha_{3}x^3  + \alpha_{4}x^{2} + \alpha_{5}x + \alpha_{6} $.
\end{center}
We find that $J_{0}\left(40 \right) \left[ 3 \right]$ can be generated using 3 Galois orbits,  2 consisting of 6 points each, and 1 consisting of 8 points; see Table \ref{tab:my_label}. For each orbit, we give the minimal polynomial of $\alpha_{1}$ and expressions for $\alpha_{2}, \ldots, \alpha_{6}$ in terms of $\alpha_{1}$.

\begin{longtable}{| c |} 
\hline
    $ u^6 + 4u^4 - 8u^2 + 12$ \\
\hline
$ \alpha_{1} = u $\\
$ \alpha_{2} = u + 1 $ \\
$ \alpha_{3} = \left( -1/9 \right)(u^5 + u^3 + 16u  + 18) $ \\
$ \alpha_{4} = \left(-1/3 \right)(u^5 + u^3 + 4u  + 3) $ \\
$ \alpha_{5} = \left(-1/3 \right)(u^5 + u^3 + u  - 6) $ \\
$ \alpha_{6} =  \left(-1/9 \right)(u^5 + u^3 + 7u  + 9) $\\
\hline 
\hline 
 $u^6 - 6u^5 + 4u^4 + 24u^3 + 256u^2 - 576u + 324 $\\
 \hline 
 $ \alpha_{1} = u $\\
$ \alpha_{2} = \left(  1/198 \right) (-u^4 + 4u^3 + 58u^2 - 124u + 126 ) $   \\ 
$ \alpha_{3} =  \left(-1/99 \right)(u^4 - 4u^3 - 58u^2 + 322u  + 468)$  \\
$ \alpha_{4} = \left( -1/99 \right)(u^4 - 4u^3 - 58*u^2 - 74u + 765)$ \\
$ \alpha_{5}  =  \left(-1/99 \right)(u^4 - 4u^3 - 58u^2 - 173u  + 468)$ \\
$ \alpha_{6} = \left(1/198 \right)(u^4 - 4u^3 - 58u^2 + 520u  - 522 )$ \\
\hline 
\hline 
$ u^8 - 126u^4 - 648u^2 - 1323 $\\
\hline 
$ \alpha_{1} = u $ \\
$ \alpha_{2} = -1 $ \\
$ \alpha_{3} =  \left( 1/189 \right)(u^7 - 63u^3 - 648u )$ \\
$ \alpha_{4} =  3 $ \\
$ \alpha_{5}  = -u $  \\
$ \alpha_{6} = 1 $\\
\hline 
 \caption{Generators for $J_{0}(40)[3]$}
    \label{tab:my_label}
\end{longtable}

The field of definition of the 3-torsion subgroup is the degree 48 number field defined by 
\begin{center}
$ x^{48} - 22x^{47} + 220x^{46} - 1298x^{45} + 4840x^{44} - 10758x^{43} + 7848x^{42} + 30564x^{41} - 90644x^{40} - 54378x^{39} + 983934x^{38} - 3228430x^{37} + 6037118x^{36} - 6706868x^{35} + 3859158x^{34} - 6290682x^{33} + 41469355x^{32} - 151827480x^{31} + 375328308x^{30} - 727099012x^{29} + 1204881284x^{28} - 1812362612x^{27} + 2558319144x^{26} - 3402905364x^{25} + 4192192588x^{24} - 4669768140x^{23} + 4602283152x^{22} - 3939374364x^{21} + 2873125672x^{20} - 1738390504x^{19} + 830314684x^{18} - 275496188x^{17} + 30094447x^{16} + 31178478x^{15} - 22364652x^{14} + 5362086x^{13} + 2307708x^{12} - 2995626x^{11} + 1676724x^{10} - 615660x^9 + 121728x^8 + 25686x^7 - 31194x^6 + 9162x^5 + 1458x^4 - 2088x^3 + 738x^2 - 126x + 9$ 
\end{center}

\section{Local Conductor Exponent at 2}
\label{cond}
Throughout this section,  $C/K$ is a smooth, projective curve defined over $K$, a finite extension of $\mathbb{Q}_{p}$.

Let $J$ be the Jacobian variety associated to $C$, $T = T_{l}J$ the $l$-adic Tate module and $V = V_{l}J = T \otimes_{\mathbb{Z}_{l}} \mathbb{Q}_{l}$ the associated $l$-adic representation, where $l$ is any prime different from $p$.  The \textit{conductor exponent} of such a representation,  as defined in \cite{DokDor} and \cite{ulmer}, is 
\begin{center}
$n = \displaystyle \int_{-1}^{\infty} \text{codim}  V^{G_{K}^{u}} \ du$
\end{center}
where $G_{K} = \Gal \left( \overline{K} / K \right)$ is the absolute Galois group of $K$ and $ \lbrace G_{K}^{u} \rbrace_{u \ge -1}$ denote the ramification groups of $G_{K}$ in upper numbering, classically defined as 
\begin{center}
    $G_{i} = \{ \sigma \in G \ \vert \sigma (\alpha) \equiv \alpha \mod{\pi^{i+1}} \ \text{for all} \alpha \in \mathcal{O}_{K} \}$
\end{center}
for all $i \ge -1$, where $\pi$ is a uniformizer. Note that $G_{-1} = G$ is the entire group and $G_{0} = I$ is simply the inertia subgroup of $G$.
The tame and wild parts are defined as 
\begin{align*}
n_{\text{tame}} & =  \displaystyle \int_{-1}^{0} \text{codim}  V^{G_{K}^{u}} \ du \\
n_{\text{wild}} & =  \displaystyle \int_{0}^{\infty} \text{codim}  V^{G_{K}^{u}} \ du.
\end{align*}

This definition is independent of the choice of prime $l$, see \cite{ulmer}. Our approach is as in \cite{DokDor}, namely for $p=2$ we take $l =3$ and use the explicit generators of the 3-torsion subgroup.
\subsection{Tame Conductor}
From the above, the tame part of the conductor can be computed as 
\begin{center}
$n_{\text{tame}} = 2g - \text{dim}V_{l}J^{I}$
\end{center}
where $I \leqslant G_{K}$ is the inertia subgroup. Notably, in the special case where $J[l]^{I} = \{ 0 \}$, we  deduce that $n_{\text{tame}}  = 2g$. In general, the tame part of the conductor can be computed from the special fibre of the regular model of $C$. More precisely, we compute:
\begin{itemize}
\item the abelian part $a$,  equal to the sum of the genera of all components;
\item the toric part $t$,  equal to the number of loops in the dual graph of $C$.
\end{itemize}

Then, the tame part of the exponent is equal to $2g -2a -t$, see \cite[Chapter 9]{NeronModels} for details. For plane curves, regular models can often by computed using the method described in \cite{ModelsoverDVRs}. In general, this is a notoriously challenging computational problem. 
\subsection{Wild Conductor} For $u \ge 0$, $G_{K}^{u}$ is pro-p and $\text{codim} V^{G_{K}^{u}} = \text{codim} \bar{V}^{G_{K}^{u}} = \text{codim} J\left[ l \right]^{G_{K}^{u}}$, see \cite{ulmer}. Thus we replace $G_{K}$ by $G = \Gal \left( K\left( J[l] \right) / K \right)$, and the wild part of the conductor is simply 
\begin{center}
$ n_{\text{wild}} =  \displaystyle \int_{0}^{\infty} \text{codim}  J[l]^{G^{u}} \ du$,
\end{center}
Alternatively,  using the definition of $G^{u}$ and $G_{u}$,  the ramification groups in upper numbering and lower numbering respectively, we find 
\begin{center}
$ n_{\text{wild}} =  \displaystyle \int_{0}^{\infty} \frac{\text{codim}  J[l]^{G_{u}}}{[ G_{0} : G_{u} ]} \ du = \displaystyle \sum_{k=1}^{\infty} \frac{\text{codim}  J[l]^{G_{k}}}{[ G_{0} : G_{k} ]}  $.
\end{center}

In our setting, when $p=2$ we take $l=3$ and using our presentation of $J\left[ 3 \right]$, the ramification groups $G_{u}$ and their action on $J\left[ 3 \right]$ are completely explicit.

\subsection{Example continued}
The 3-torsion subgroup $J_{0}\left( 40 \right)[3]$ is defined over a degree $48$ number field defined by the polynomial $f$, stated in Section \ref{eg1}. This polynomial remains irreducible over $\mathbb{Q}_{2}$, and defines a degree $48$ Galois extension of $\mathbb{Q}_{2}$, which we denote by $L$. Let $G = \Gal \left( L/ \mathbb{Q}_{2} \right)$. 
Using \texttt{Magma} we verify that $G \cong \GL_{2}(\mathbb{F}_{3})$ and find explicit generators for $G$ 
\begin{center}
    $ G = \langle \tau_{1}, \tau_{2}, \beta, \sigma_{1}, \sigma_{2} \rangle$
\end{center}
where $\tau_{i}$ have order $2$, $\beta$ has order $3$ and $\sigma_{j}$ have order 4.
Moreover, using our explicit description we find 2 divisors in the first orbit, 2 divisors in the second orbit and 2 divisors in the third orbit, which together form a basis for $J[3]$, see the \texttt{Galact.m} file of the repository for details. We explicitly compute the action of the above geneators on this basis and we find 
\begin{center}
  $\tau_{1} = \begin{psmallmatrix}
      1 & 0 & 0 & 0 & 0 & 0 \\ 0 & 1 & 0 & 0 & 0 & 0 \\ 0 & 0 & 1 & 0 & 0 & 0 \\ 0 & 0 & 0 & 1 & 0 & 0 \\ 0 & 0 & 0 & 0 & 2 & 0 \\ 0 & 0 & 0 & 0 & 0 & 2 
  \end{psmallmatrix};$  
   $\tau_{2} = \begin{psmallmatrix}
      0 & 2 & 1 & 1 & 0 & 0 \\ 2 & 0 & 2 & 2 & 0 & 0 \\ 0 & 0 & 0 & 2 & 0 & 0 \\ 0 & 0 & 2 & 0 & 0 & 0 \\ 0 & 0 & 0 & 0 & 0 & 1 \\ 0 & 0 & 0 & 0 & 1 & 0 
  \end{psmallmatrix}; $  
   $\sigma_{1} = \begin{psmallmatrix}
      1 & 0 & 0 & 0 & 0 & 0 \\ 0 & 1 & 0 & 0 & 0 & 0 \\ 0 & 0 & 1 & 0 & 0 & 0 \\ 0 & 0 & 0 & 1 & 0 & 0 \\ 0 & 0 & 0 & 0 & 0 & 1 \\ 0 & 0 & 0 & 0 & 2 & 0 
  \end{psmallmatrix}$; 
   $\sigma_{2} = \begin{psmallmatrix}
      1 & 0 & 0 & 0 & 0 & 0 \\ 0 & 1 & 0 & 0 & 0 & 0 \\ 0 & 0 & 1 & 0 & 0 & 0 \\ 0 & 0 & 0 & 1 & 0 & 0 \\ 0 & 0 & 0 & 0 & 1 & 2  \\ 0 & 0 & 0 & 0 & 2 & 2 
  \end{psmallmatrix}$ ;
   $\beta = \begin{psmallmatrix}
      0 & 2 & 0 & 0 & 0 & 0 \\ 1 & 2 & 0 & 0 & 0 & 0 \\ 0 & 0 & 0 & 2 & 0 & 0 \\ 0 & 0 & 1 & 2 & 0 & 0 \\ 0 & 0 & 0 & 0 & 2 & 2 \\ 0 & 0 & 0 & 0 & 1 & 0 
  \end{psmallmatrix}$.
  \end{center}
Moreover, we compute the ramification group and found them to be as follows:
\begin{align*}
& G_{0} = \langle \tau_{1}, \beta, \sigma_{1}, \sigma_{2} \rangle \ \text{and} \ \vert G_{0} \vert = 24; \\
& G_{1} =  \langle \tau_{1}, \sigma_{1}, \sigma_{2} \rangle \ \text{and} \ \vert G_{1} \vert = 8;\\
& G_{2} = G_{3}  =  \langle \tau_{1} \rangle \ \text{and} \ \vert G_{2} \vert = \vert G_{3} \vert = 2; \\
& G_{n} = 1 \ \text{for all } \ n \ge 4.
\end{align*}
We also verify that $G_{0} \cong \SL_{2}(\mathbb{F}_{3})$ and $G_{1} \cong Q_{8}$.
Taking Galois invariants we get:
\begin{center}
 $ J_{0}\left( 40 \right)[3]^{G_{1}} \cong \left( \mathbb{Z} / 3\mathbb{Z} \right)^{4}$  and $ J_{0}\left( 40 \right)[3]^{G_{2}} \cong \left( \mathbb{Z} / 3\mathbb{Z} \right)^{4}$   
\end{center}
and thus  $n_{\text{wild}} = 2/3 + 2/12 + 2/12 = 1$. See the \texttt{X040wildcondexp.m} file of the repository for details of this calculation. 
We also note that $J_{0}(40)[3]^{G_{0}} \cong \left( \Z / 3 \Z \right)^{2}$, and thus the tame part of the conductor cannot be deduce from the $3$-torsion. Using \cite{edixhoven1990minimal} we compute the minimal regular model of $X_{0}(40)$ over $\Z_{2}$, and note that its special fibre has $8$ components, each of genus $0$ and no loops in the dual graph, and thus $n_{2, \text{tame}} = 6$, and so $n_{2} =7$.
\bibliographystyle{abbrv}
\bibliography{ref}

\begin{thebibliography}{10}

\bibitem{bernard2009jacobians}
N.~Bernard, F.~Lepr{\'e}vost, and M.~Pohst.
\newblock Jacobians of genus-2 curves with a rational point of order 11.
\newblock {\em Experimental Mathematics}, 18(1):65--70, 2009.

\bibitem{NeronModels}
S.~Bosch, W.~L{\"u}tkebohmert, and M.~Raynaud.
\newblock {\em {N{\'e}ron models}}, volume~21.
\newblock Springer Science \& Business Media, 2012.

\bibitem{shortvectors}
W.~Bosma, J.~Cannon, and C.~Playoust.
\newblock The {M}agma algebra system. {I}. {T}he user language.
\newblock {\em J. Symbolic Comput.}, 24(3-4):235--265, 1997.
\newblock Computational algebra and number theory (London, 1993).

\bibitem{julia}
P.~Breiding and S.~Timme.
\newblock Homotopycontinuation.jl: A package for homotopy continuation in julia, 2018.

\bibitem{bruin2014descent}
N.~Bruin, E.~V. Flynn, and D.~Testa.
\newblock Descent via (3, 3)-isogeny on jacobians of genus 2 curves.
\newblock {\em arXiv preprint arXiv:1401.0580}, 2014.

\bibitem{cohen}
H.~Cohen.
\newblock {\em A course in computational algebraic number theory}, volume 138 of {\em Graduate Texts in Mathematics}.
\newblock Springer-Verlag, Berlin, 1993.

\bibitem{dWLLL}
B.~M.~M. de~Weger.
\newblock Erratum: ``{S}olving exponential {D}iophantine equations using lattice basis reduction algorithms'' [{J}. {N}umber {T}heory {\bf 26} (1987), no. 3, 325--367; {MR}0901244 (88k:11097)].
\newblock {\em J. Number Theory}, 31(1):88--89, 1989.

\bibitem{ModelsoverDVRs}
T.~Dokchitser.
\newblock {Models of curves over DVRs}.
\newblock {\em arXiv preprint arXiv:1807.00025}, 2018.

\bibitem{Conductorexp}
T.~Dokchitser, V.~Dokchitser, C.~Maistret, and A.~Morgan.
\newblock Arithmetic of hyperelliptic curves over local fields.
\newblock {\em arXiv preprint arXiv:1808.02936}, 2018.

\bibitem{DokDor}
T.~Dokchitser and C.~Doris.
\newblock {3-torsion and conductor of genus 2 curves}.
\newblock {\em Mathematics of Computation}, 88(318):1913--1927, 2019.

\bibitem{dolgachev2012classical}
I.~V. Dolgachev.
\newblock {\em Classical algebraic geometry: a modern view}.
\newblock Cambridge University Press, 2012.

\bibitem{edixhoven1990minimal}
B.~Edixhoven.
\newblock Minimal resolution and stable reduction of $ x\_0 (n) $.
\newblock In {\em Annales de l'institut Fourier}, volume~40, pages 31--67, 1990.

\bibitem{FP}
U.~Fincke and M.~Pohst.
\newblock Improved methods for calculating vectors of short length in a lattice, including a complexity analysis.
\newblock {\em Math. Comp.}, 44(170):463--471, 1985.

\bibitem{HyperCurves2}
E.~Flynn.
\newblock The arithmetic of hyperelliptic curves.
\newblock In {\em Algorithms in Algebraic Geometry and Applications}, pages 165--175. Springer, 1996.

\bibitem{flynn1990large}
E.~V. Flynn.
\newblock Large rational torsion on abelian varieties.
\newblock {\em Journal of Number Theory}, 36(3):257--265, 1990.

\bibitem{flynn2015descent}
E.~V. Flynn.
\newblock Descent via (5, 5)-isogeny on jacobians of genus 2 curves.
\newblock {\em Journal of Number Theory}, 153:270--282, 2015.

\bibitem{howe2015genus}
E.~W. Howe.
\newblock Genus-2 jacobians with torsion points of large order.
\newblock {\em Bulletin of the London Mathematical Society}, 47(1):127--135, 2015.

\bibitem{howe2000large}
E.~W. Howe, F.~Lepr{\'e}vost, and B.~Poonen.
\newblock Large torsion subgroups of split jacobians of curves of genus two or three.
\newblock 2000.

\bibitem{j2023computing}
M.~J~Steffen and R.~Berno.
\newblock Computing torsion subgroups of jacobians of hyperelliptic curves of genus 3, 2023.

\bibitem{leprevost1992torsion}
F.~Lepr{\'e}vost.
\newblock Torsion sur des familles de courbes de genre g.
\newblock {\em Manuscripta mathematica}, 75:303--326, 1992.

\bibitem{2torsion}
E.~Lupoian.
\newblock {Two-torsion subgroups of some modular Jacobians}.
\newblock {\em International Journal of Number Theory}, 20(10):2543--2573, 2024.

\bibitem{NSLLL}
P.~Q. Nguyen and D.~Stehl\'{e}.
\newblock An {LLL} algorithm with quadratic complexity.
\newblock {\em SIAM J. Comput.}, 39(3):874--903, 2009.

\bibitem{Silverman2}
J.~H. Silverman.
\newblock {\em Advanced topics in the arithmetic of elliptic curves}, volume 151.
\newblock Springer Science \& Business Media, 1994.

\bibitem{smart}
N.~P. Smart.
\newblock {\em The algorithmic resolution of Diophantine equations: a computational cookbook}, volume~41.
\newblock Cambridge university press, 1998.

\bibitem{NA}
J.~Stoer, R.~Bulirsch, R.~Bartels, W.~Gautschi, and C.~Witzgall.
\newblock {\em Introduction to numerical analysis}, volume 1993.
\newblock Springer, 1980.

\bibitem{stoll1998height}
M.~Stoll.
\newblock On the height constant for curves of genus two.
\newblock 1998.

\bibitem{HyperCurves}
M.~Stoll.
\newblock Arithmetic of hyperelliptic curves.
\newblock {\em Summer Semester}, 2014.

\bibitem{ulmer}
D.~Ulmer.
\newblock {Conductors of l-adic representations}.
\newblock {\em Proceedings of the American Mathematical Society}, 144:2291--2299, 2016.

\end{thebibliography}

\end{document}